\DeclareSymbolFont{rsfscript}{OMS}{rsfs}{m}{n}
\DeclareSymbolFontAlphabet{\mathrsfs}{rsfscript}
\renewcommand{\mathcal}{\mathrsfs}
\def\titlerunning#1{\gdef\titrun{#1}}
\def\author#1{\gdef\autrun{\def\and{\unskip, }#1}\gdef\@author{#1}}
\def\address#1{{\def\and{\\\hspace*{18pt}}\renewcommand{\thefootnote}{}%
\footnote {#1}}%
\markboth{\autrun}{\titrun}} \makeatother
\def\email#1{e-mail: #1}
\def\keywords#1{\par\medskip
\noindent\textbf{Keywords.} #1}
\DeclareMathAlphabet{\mathpzc}{OT1}{pzc}{m}{it}
\definecolor{verde}{rgb}{0,.5,0}
\DeclareSymbolFont{bbold}{U}{bbold}{m}{n}
\DeclareSymbolFontAlphabet{\mathbbold}{bbold}
\numberwithin{equation}{section}
\def\N{{\mathbb N}}
\newfont{\sss}{cmssi10 at 11pt}
\newfont{\bss}{cmssbx10 at 11pt}
\newfont{\tit}{cmitt10 at 11pt}
\newcommand{\K}{{\bf K}}
\newcommand{\D}{{\bf D}}
\newcommand{\V}{{\bf V}}
\newcommand{\AN}{A^{\N}}
\newcommand{\NA}{A^{-\N}}
\newcommand{\mfrg}[3] {{#1}:{#2}\mathop{\hbox{\kern5pt$\circ$\kern-12pt\raise0.1pt\hbox
{$\longrightarrow$}}}{#3}}
\newcommand{\infe}[1]{{#1}^{\!{\scriptscriptstyle{-\!\infty}}}}
\newcommand{\infd}[1]{{#1}^{\!\scriptscriptstyle{+\!\infty}}}
\newtheorem{theorem}{Theorem}[section]
\newtheorem{proposition}[theorem]{Proposition}
\newtheorem{lemma}[theorem]{Lemma}
\newtheorem{remark}[theorem]{Remark}
\newtheorem{example}[theorem]{Example}
\newenvironment{definition*}{\begin{trivlist}\item[\hskip
    \labelsep{\bf Definition\quad}]}%
  {\hfill\qed\end{trivlist}}
\newenvironment{notation*}{\begin{trivlist}\item[\hskip
    \labelsep{\bf Notation\quad}]}%
  {\end{trivlist}}
  \def\qed{{\unskip\nobreak\hfil\penalty50\hskip .001pt\hbox{}%
      \nobreak\hfil
      \vrule height 1.2ex width 1.1ex depth -.1ex
      \parfillskip=0pt\finalhyphendemerits=0\medbreak}}
\newenvironment{proof}{\begin{trivlist}\item[\hskip%
     \labelsep{\bf Proof.\quad}]}%
 {\hfill\qed\rm\end{trivlist}}
\qed\end{trivlist}}%
\begin{document}

\titlerunning{The overlap gap between left-infinite and right-infinite words}

\title{\bf The overlap gap between left-infinite\\ and right-infinite words}

\author{J. C. Costa %
  \and %
  C. Nogueira %
  \and %
  M. L. Teixeira%
}


\maketitle

\address{ %
  J. C. Costa \& M. L. Teixeira: %
  CMAT, Dep.\ Matem\'{a}tica e Aplica\c{c}\~{o}es, Universidade do Minho, Campus
  de Gualtar, 4710-057 Braga, Portugal; %
  \email{jcosta@math.uminho.pt, mlurdes@math.uminho.pt} %
  \and %
  C. Nogueira: %
   CMAT, Escola Superior de Tecnologia e Gest\~ao,
  Instituto Polit\'ecnico de Leiria,
  Campus 2, Morro do Lena, Alto Vieiro, 2411-901
   Leiria, Portugal; %
   \email{conceicao.veloso@ipleiria.pt} %
}

\begin{abstract}
Given two finite words $u$ and $v$ of equal length, define the \emph{overlap gap between $u$ and $v$}, denoted  $og(u,v)$,  as the least integer $m$ for which there exist words $x$ and $x'$ of length $m$ such that $xu=vx'$ or $ux=x'v$. Informally, the overlap gap measures the outside parts of the greatest overlap of the given words.

For a left-infinite word $\lambda$ and a right-infinite word $\rho$, let $og_{\lambda,\rho}$ be the function defined, for each non-negative integer $n$, by $og_{\lambda,\rho}(n)=og(\lambda_n,\rho_n)$, where $\lambda_n$ and $\rho_n$ are, respectively, the suffix of $\lambda$ and the prefix of $\rho$ of length $n$. Also, denote by $OG_{\lambda,\rho}$ the image of the function $og_{\lambda,\rho}$.

In this paper, we show that $OG_{\lambda,\rho}$ is a finite set if and only if $\lambda$ and $\rho$ are ultimately periodic infinite words of the form $\lambda=\infe{u}w_1=\cdots uuuw_1$ and $\rho=w_2\infd{u}=w_2uuu\cdots$ for some finite words $u$, $w_1$ and $w_2$.

 \keywords{ Infinite word, overlap words, ultimately periodic.}
\end{abstract}

\section{Introduction}
 This paper deals with a combinatorial problem on infinite words that arose in the study of  semidirect products of pseudovarieties of the form $\V * \D$. Recall that a \emph{(semigroup) pseudovariety} is a class of finite semigroups closed under taking subsemigroups, homomorphic images and finite direct products, and that  \D\ is the pseudovariety of all finite semigroups whose
idempotents are right zeros.  The pseudovarieties of the form $\V * \D$ contain both the pseudovariety \D\ and its dual \K, the  pseudovariety of all finite semigroups whose idempotents are left zeros.  Free profinite semigroups relative to a given pseudovariety have been shown to be  important in revealing essential information about the pseudovariety~\cite{Almeida:1992}. Since 
 the free pro-$\D$  (resp.\  pro-$\K$) semigroups are described (see~\cite{Almeida:1992})  in terms of  finite and left-infinite  (resp.\ right-infinite) words, it is natural to expect that  combinatorial properties involving  left-infinite and  right-infinite words may arise in the study of the pseudovarieties $\V * \D$.

In~\cite{Costa&Nogueira&Teixeira:2015}, the authors  investigated a certain property, called  $\kappa$-reducibility, of semidirect products of the form $\bf V*\bf D$. In that investigation,  we wanted to choose a positive integer $n$ (determining a suffix $\lambda_n$ of length $n$ for each left-infinite word $\lambda$) in such a way that, for two  left-infinite words $\lambda$ and $\rho$, if $\lambda_n$ and $\rho_n$ would occur in a finite word, then those occurrences would be distant enough. To be more precise, given any positive integer $q$, we wanted to ensure that, if $\lambda_nx=x'\rho_n$ or $x\lambda_n=\rho_nx'$ then the length of the words $x$ and $x'$ would be necessarily at least $q$. This objective was obviously impossible in the general case. We have, however, found that if one takes a representative for each class of confinal left-infinite words (i.e.,  left-infinite words having a common suffix), then the existence of a positive integer $n$ in the above conditions is guaranteed for these representatives. This study is reviewed in Section 4 below.

In the present paper, we study a similar combinatorial question but involving a  left-infinite word and a right-infinite word instead of  two  left-infinite words. This question is  motivated by an attempt to extend the above investigation~\cite{Costa&Nogueira&Teixeira:2015} to the notion of  complete $\kappa$-reducibility (see~\cite{Almeida:2002}) of semidirect products of the form $\bf V*\bf D$.  The combinatorial question is the following:  given a  left-infinite word $\lambda$, a right-infinite word $\rho$ and a positive integer $q$, is there a positive integer $n$ such that, for the suffix $\lambda_n$  of length $n$ of $\lambda$ and the prefix $\rho_n$ of length $n$  of $\rho$,  if $\lambda_nx=x'\rho_n$ or $x\lambda_n=\rho_nx'$ then the length of the words $x$ and $x'$ are inevitably at least $q$.  The answer to this problem is negative in general, since for the infinite words of the form $\lambda=\infe{u}w_1$ and $\rho=w_2\infd{u}$  the words $x$ and $x'$ may have a length lower than the maximum of the lengths of the words $uw_1$ and $w_2u$.  In this paper,  we show that  the question above has a negative answer only for infinite words of that form. 

\section{Preliminaries}\label{sec:Preliminaries}
In this section, we start by briefly recalling the basic definitions and notations on finite and infinite words. We follow closely the terminology of Lothaire~\cite{Lothaire:2002}. Next, we introduce the overlap gap function associated to a pair of words (one left-infinite and the other right-infinite) that motivates this work.

\subsection{Finite and infinite words}\label{subsection:words}
An \emph{alphabet} is a finite non-empty set $A$ and  $A^+$ and $A^*$ denote, respectively, the \emph{free semigroup} and the \emph{free monoid} generated by  $A$, with $\varepsilon$ representing the empty word. The length of a word $w \in A^*$ is indicated by $|w|$ and the set of all words over $A$ with length $n$ is denoted by $A^n$.

 A word $u$ is called a {\em factor} (resp.\ a {\em prefix}, resp.\ a {\em suffix}) of a word $w$ if there exist words $x, y$ such that $w = xuy$ (resp.\  $w= uy$, resp.\ $w = xu$).  A word is named {\em primitive} if it cannot be written in the form $u^n$ with $n>1$.  Two words $u$ and $v$ are said to be {\em conjugate} if  $u=w_1w_2$ and $v=w_2w_1$ for some words $w_1,w_2\in A^*$.

A   {\em left-infinite} word on $A$ is a sequence $\lambda=(a_n)_{n}$ of elements of $A$ indexed by  $-\N$, also written
$\lambda=\cdots a_{-2}a_{-1}a_0$. The set of all left-infinite words on $A$ will be denoted  by $\NA$. Dually, a {\em right-infinite} word on $A$ is a sequence $\rho=(a_n)_{n}$ of letters of $A$ indexed by  $\N$, also written $\rho= a_{0}a_{1}a_2\cdots$, and $\AN$  denotes the set of all right-infinite words on $A$. A left-infinite word $\lambda$ (resp.\  a right-infinite word $\rho$) of the form $\lambda=\infe{u}w=\cdots uuuw$ (resp.\ $\rho=w\infd{u}=wuuu\cdots $), with $u\in A^+$ and $w\in A^*$, is said to be {\em ultimately periodic}.  The word $u$ (and its length $|u|$) is called  \emph{a period} of $\lambda$ (resp. of $\rho$) and the word $w$ (and its length $|w|$) is called \emph{a preperiod} of $\lambda$ (resp. of $\rho$). Notice that, for two ultimately periodic words 
$\alpha_1,\alpha_2\in\NA\cup\AN$, if $P_i=\{w\in A^+: \mbox{$w$ is a period of $\alpha_i$}\}$ ($i=1,2$), then either $P_1=P_2$ or $P_1\cap P_2=\emptyset$.

For any ultimately periodic word $\lambda\in\NA$ (resp.\ $\rho\in\AN$) there exist unique words $u,w$ of shortest length such that $\lambda=\infe{u}w$ (resp.\ $\rho=w\infd{u}$). Of course, in this case, $u$ is a primitive word and, if $w\neq \varepsilon$, the first (resp.\ the last)  letters of $u$ and $w$ do not coincide. Besides, $\lambda=\infe{u}w$ (resp.\ $\rho=w\infd{u}$) is said to be the \emph{canonical form} of $\lambda$  (resp. of $\rho$), the word    $u$ (and its length) is called  \emph{the period} of $\lambda$  (resp. of $\rho$) and  the word $w$ (and its length) is called \emph{the preperiod} of $\lambda$  (resp. of $\rho$). Obviously, ultimately periodic words  may have the same period number and distinct period words. If the preperiod number of an ultimately periodic word $\alpha\in\NA\cup\AN$ is $0$ and $p\in \N$ is a period of $\alpha$, then $\alpha$ is said to be a \emph{$p$-periodic} word.

An integer $p \geq 1$ is a \emph{period} of a finite word $w = a_1a_2\cdots a_n$ where $a_i\in A$ if $a_i=a_{i+p}$ for $i=1,\ldots,n-p$. In such a case, $w$ is called a \emph{$p$-periodic} word. Notice that a finite word $w\in A^+$ is $p$-periodic if and only if $w$ is a factor of a $p$-periodic infinite word.

\subsection{The overlap gap function}\label{subsection:overlap-gap-function}
For words   $u$ and $v$ of equal length, we define:
$$\begin{array}{rcl}
log(u,v)&\hspace*{-2mm}=\hspace*{-2mm}&\mbox{min}\{n\in\N:  ux=x'v\mbox{ with }x,x'\in A^n\},\\
rog(u,v)&\hspace*{-2mm}=\hspace*{-2mm}&\mbox{min}\{n\in\N: xu=vx'\mbox{ with }x,x'\in A^n\},\\
og(u,v)&\hspace*{-2mm}=\hspace*{-2mm}&\mbox{min}\{log(u,v),rog(u,v)\}.
\end{array}$$
 The non-negative integers $log(u,v)$,  $rog(u,v)$ and  $og(u,v)$ are called, respectively, the  \emph{left overlap gap},   the \emph{right overlap gap} and the \emph{overlap gap} between $u$ and $v$. Notice that $log(u,v)=0$ if and only if $u=v$ if and only if $rog(u,v)=0$.

Consider infinite words $\lambda\in \NA$   and  $\rho\in\AN$. For any non-negative integer $m$, we will denote by $\lambda_m$ and $\rho_m$, respectively, the suffix of $\lambda$ and the prefix of $\rho$ of length $m$. For each $f\in\{log,rog,og\}$, we let $f_{\lambda,\rho}$ be the function $f_{\lambda,\rho}: \N\rightarrow \N$ defined, for each non-negative integer $n$, by $f_{\lambda,\rho}(n)=f(\lambda_n,\rho_n)$.  The images $log_{\lambda,\rho}(\N)$, $rog_{\lambda,\rho}(\N)$  and $og_{\lambda,\rho}(\N)$ of the functions $log_{\lambda,\rho}$, $rog_{\lambda,\rho}$  and $og_{\lambda,\rho}$, will be denoted, respectively, by  $LOG_{\lambda,\rho}$, $ROG_{\lambda,\rho}$ and $OG_{\lambda,\rho}$. Usually, when the words $\lambda$ and $\rho$ are clear from the context they will be omitted in the above notations. 

\begin{example}\label{example:periodic}
Consider the left-infinite word $\lambda=\infe{(baa)}$   and  the right-infinite word $\rho=\infd{(aab)}$ over the alphabet $A=\{a,b\}$. Then, for every $k\in\N$,
$$\begin{array}{l}
log(0)=log(1)=\ log(2+3k)=0,\ log(3+3k)=1,\ log(4+3k)=2,\\
rog(0)=rog(1)=\ rog(2+3k)=0,\ rog(3+3k)=2,\ rog(4+3k)=1,\\
og(0)=og(1)=\ og(2+3k)=0,\ og(3+3k)=\ og(4+3k)=1.
\end{array}$$
Therefore, $LOG=ROG=\{0,1,2\}$ and $OG=\{0,1\}$.
\end{example}

The following observation is useful.

\begin{lemma}\label{lemma:logn+1}
Let $\lambda\in \NA$   and  $\rho\in\AN$. For every $n\in\N$,
$$log(n+1)\in\{0,log(n)+1\},\quad rog(n+1)\geq rog(n)-1.$$
\end{lemma}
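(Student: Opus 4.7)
The plan is to exploit that $\lambda_{n+1}$ extends $\lambda_n$ by one letter on the left and $\rho_{n+1}$ extends $\rho_n$ by one letter on the right. I would fix letters $a,b\in A$ with $\lambda_{n+1}=a\lambda_n$ and $\rho_{n+1}=\rho_n b$; both claims then reduce to short manipulations that push these two boundary letters through witnessing overlap equations.

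For the $log$ part I would separately establish the upper bound $log(n+1)\le log(n)+1$ and the complementary lower bound $log(n+1)\ge log(n)+1$ (which I only need when $log(n+1)>0$). For the upper bound, starting from a witness $\lambda_n x=x'\rho_n$ of length $log(n)$, pre-multiplying by $a$ and post-multiplying by $b$ gives $\lambda_{n+1}(xb)=(ax')\rho_{n+1}$, whose factors have length $log(n)+1$. For the lower bound, assuming $m=log(n+1)\ge 1$ and taking a witness $\lambda_{n+1}x=x'\rho_{n+1}$ with $|x|=|x'|=m$, I would rewrite it as $a\lambda_n x=x'\rho_n b$ and compare extremal letters: the first letter of $x'$ must be $a$ and the last letter of $x$ must be $b$, so writing $x'=ay$ and $x=zb$ and cancelling $a$ on the left and $b$ on the right yields $\lambda_n z=y\rho_n$ with $|z|=|y|=m-1$. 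Combined with the upper bound this forces $log(n+1)=log(n)+1$ whenever $log(n+1)\neq 0$, which is the stated dichotomy.

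For the $rog$ inequality only one direction is needed and it is even simpler. Starting from a witness $x\lambda_{n+1}=\rho_{n+1}x'$ of length $m=rog(n+1)$, the substitutions $\lambda_{n+1}=a\lambda_n$ and $\rho_{n+1}=\rho_n b$ produce $(xa)\lambda_n=\rho_n(bx')$, a witness for $rog(n)$ of length $m+1$. Hence $rog(n)\le rog(n+1)+1$, which rearranges to the claim.

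No serious obstacle is anticipated; the only conceptual point worth noting is the asymmetry between the two statements. In the $log$ equation $a\lambda_n x=x'\rho_n b$ the boundary letters $a,b$ sit at the \emph{outer} ends, so they can be cancelled to shorten a non-trivial witness; in the $rog$ substitution $(xa)\lambda_n=\rho_n(bx')$ the same letters are buried \emph{between} $x$ and $\lambda_n$ on the left and between $\rho_n$ and $x'$ on the right, so witnesses can only be lengthened. That is precisely why the $rog$ statement yields only a one-sided inequality rather than a dichotomy.
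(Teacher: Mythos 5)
Your proposal is correct and follows essentially the same route as the paper: both parts hinge on pushing the boundary letters $a$ and $b$ through a witnessing equation, extending a witness for $log(n)$ to one for $log(n+1)$ and cancelling the outer letters of any nontrivial witness for $log(n+1)$ to get one for $log(n)$, and extending a witness for $rog(n+1)$ to one for $rog(n)$. The only difference is presentational (you argue the lower bounds directly where the paper argues by contradiction), and your closing remark on the inner versus outer position of the boundary letters correctly explains the asymmetry between the two statements.
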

\begin{proof} Suppose $\lambda=(a_n)_{n\in-\N}$   and  $\rho=(b_n)_{n\in\N}$. For a fixed $n\in\N$ let $p=log(n)$ and $q=rog(n)$. Then $\lambda_nx=x'\rho_n$ and  $y\lambda_n=\rho_ny'$ for some $x,x'\in A^p$ and $y,y'\in A^q$. Notice that $\lambda_{n+1}=a_n\lambda_n$ and $\rho_{n+1}=\rho_nb_n$.

From $\lambda_nx=x'\rho_n$  it follows that $a_n\lambda_nxb_n=a_nx'\rho_nb_n$, that is, $\lambda_{n+1}xb_n=a_nx'\rho_{n+1}$. Whence, $log(n+1)\leq p+1$. Suppose that $log(n+1)\not\in\{0, p+1\}$. Hence
$\lambda_{n+1}z=z'\rho_{n+1}$ for some words $z$ and $z'$ with $1\leq |z|=|z'|\leq p$. Then $z=wb_n$ and $z'=a_nw'$ for some words $w$ and $w'$, whence $\lambda_{n}w=w'\rho_{n}$. From this equality one deduces that $log(n)<p$ which contradicts the assumption that  $p=log(n)$. Therefore  $log(n+1)\in\{0, p+1\}=\{0,log(n)+1\}$.

Now, the condition $ rog(n+1)\geq rog(n)-1$ is obviously true for $q= rog(n)\leq 1$. So, we assume   $ q\geq 2$. Suppose that $rog(n+1)< q-1$. Then $z\lambda_{n+1}=\rho_{n+1}z'$ for some words $z$ and $z'$ with $|z|=|z'|< q-1$. Then $w\lambda_{n}=\rho_{n}w'$, where $w=za_n$ and $w'=b_nz'$, meaning that $rog(n)<q$. This contradicts the assumption  $q=rog(n)$ and, so, we conclude that $ rog(n+1)\geq rog(n)-1$.
\end{proof}

\begin{example}\label{example:ult-periodic}
Consider the left-infinite word $\lambda=\infe{(b^2a^2)}cab$   and  the right-infinite word $\rho=\infd{(ab^2a)}$ over the alphabet $A=\{a,b,c\}$. The values of the functions $log$, $rog$ and $og$ for these words $\lambda$ and $\rho$ are shown in the following table 

\begin{footnotesize}
$$\begin{array}{|c||c|c|c|c|c|c|c|c|c|c|c|c|c|c|c|c|}\hline
\rule[-1mm]{0mm}{5mm}{n} & \;\! 0\;\!  &\;\!  1\;\!  & \;\! 2 \;\!  &\;\!  3\;\!  &\;\!  4\;\!  & \;\! 5 \;\! & \;\! 6\;\!  & \;\! 7 \;\! & \;\! 8 \;\! &\;\! 9\;\! & 10 & 11 & 12 & 13 & 14 &  \cdots \\
\hline\hline

\rule[-1mm]{0mm}{5mm}{log(n)}  & 0 & 1 & 0  & 1 & 2 & 3 & 4 & 5 & 6 & 7 & 8 & 9 & 10 & 11 & 12 & \cdots\\
\hline
\rule[-1mm]{0mm}{5mm}{rog(n)}  & 0 & 1 & 0  & 3 & 3 & 3 & 5 & 5 & 4 & 3 & 6 & 5 & 4 & 3 & 6 &\cdots\\
\hline
\rule[-1mm]{0mm}{5mm}{og(n)}  & 0 & 1 & 0  & 1 & 2 & 3 & 4 & 5 & 4 & 3 & 6 & 5 & 4 & 3 & 6 &\cdots\\
\hline
\end{array}$$
\end{footnotesize}
\smallskip

So, $LOG=\N$, $ROG=\{0,1,3,4,5,6\}$ and $OG=\{0,1,2,3,4,5,6\}$.
\end{example}

\section{Characterization of overlap functions with finite image}\label{sec:characterization}
The purpose of this paper is to characterize the pairs of words in $\NA\times\AN$ for which the overlap gap function has finite image.

\subsection{Overlap gap versus right overlap gap}\label{subsec:og_versus_rog}
We begin by showing that $og_{\lambda,\rho}(\N)$ being finite or not depends  exclusively on the function $rog_{\lambda,\rho}$. 

\begin{proposition}\label{prop:og_vs_rog}
Let $\lambda\in \NA$   and  $\rho\in\AN$. The set $OG_{\lambda,\rho}$ is finite  if and only if the set $ROG_{\lambda,\rho}$ is finite.
\end{proposition}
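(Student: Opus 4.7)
The direction ``$ROG$ finite $\Rightarrow$ $OG$ finite'' is immediate since $og(n)\le rog(n)$ for every $n\in\N$. For the converse, I argue by contraposition: assume $og(n)\le M$ for all $n$ and that $rog$ is unbounded. I will derive a uniform bound on $rog$, giving the desired contradiction. The key object is the set of ``reset positions''
$$R=\{r\in\N:\lambda_r=\rho_r\}=\{r:log(r)=0\},$$
for which a quick induction from Lemma~\ref{lemma:logn+1} gives $log(n)=n-\max(R\cap[0,n])$. If $R$ is finite with maximum $r^\ast$, then $log(n)=n-r^\ast>M$ for every $n>r^\ast+M$, so $og(n)\le M$ forces $rog(n)\le M$; combined with the finite list of values $rog(n)$ for $n\le r^\ast+M$, this bounds $rog$ globally, a contradiction.

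Assume then that $R=\{r_0<r_1<\cdots\}$ is infinite; write $g_i:=r_{i+1}-r_i$. I bound $rog$ on each inter-reset interval $[r_i,r_{i+1}]$ by separating cases on $g_i$. If $g_i\ge M+2$, the position $n_0:=r_i+M+1$ lies strictly before $r_{i+1}$ and satisfies $log(n_0)=M+1>M$, whence $rog(n_0)\le M$. Applying the inequality $rog(n+1)\ge rog(n)-1$ from Lemma~\ref{lemma:logn+1} \emph{backwards} from $n_0$ yields $rog(m)\le M+(n_0-m)\le 2M+1$ on $[r_i,n_0]$, while on $[n_0+1,r_{i+1}-1]$ the condition $log(m)>M$ again forces $rog(m)\le M$.

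If instead $g_i\le M+1$, compare $\lambda_{r_i}=\rho_{r_i}$ and $\lambda_{r_{i+1}}=\rho_{r_{i+1}}$: since $\lambda_{r_i}$ is the suffix of length $r_i$ of $\lambda_{r_{i+1}}$, one obtains $b_{j+g_i}=b_j$ for $j=0,\ldots,r_i-1$, so $g_i$ is a period of the finite word $\rho_{r_{i+1}}$. At a reset $r$ the equation $x\lambda_r=\rho_rx'$ reduces to $x\rho_r=\rho_rx'$, whose minimal solution has $|x|$ equal to the smallest period of $\rho_r$; hence $rog(r_{i+1})\le g_i\le M+1$, and the same backward propagation gives $rog(m)\le 2M+2$ on $[r_i,r_{i+1}]$. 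In either sub-case $rog\le 2M+2$ on each inter-reset interval, and $[0,r_0]$ contributes only finitely many values of $rog$; so $rog$ is globally bounded --- the desired contradiction. The main subtlety is that Lemma~\ref{lemma:logn+1} yields only a one-sided inequality on $rog$, so upper bounds can be propagated only \emph{backwards} from positions where $rog$ is already known to be small; exhibiting such a position inside every inter-reset interval requires the separate wide-gap and narrow-gap arguments.
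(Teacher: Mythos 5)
Your proof is correct, and it is organized quite differently from the paper's, even though both arguments rest on the same three ingredients: $og=\min\{log,rog\}$, the two recurrences of Lemma~\ref{lemma:logn+1}, and the equivalence $log(n)=0\iff rog(n)=0$. The paper argues by contradiction from a \emph{single} point: if $rog(n)>2M$, the forward inequality $rog(n+i)\ge rog(n)-i$ keeps $rog>M$ on a window of length $M+1$, forcing $log\le M$ there; but a nonzero $log$ must grow by $1$ at each step, so some $log(n+j)$ vanishes, whence $rog(n+j)=0$ --- contradiction. You instead prove the contrapositive globally, partitioning $\N$ by the reset positions and propagating the bound on $rog$ \emph{backwards} from a point in each inter-reset interval where it is known to be small; this yields the explicit uniform bound $rog\le 2M+2$, slightly weaker than the paper's $2M$ (cf.\ Remark~\ref{remark:upper_bound_ROG}) but entirely adequate. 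Your approach is longer but makes the structure of the set $\{n:log(n)=0\}$ and the quantitative bound more visible. One remark in your narrow-gap case is incorrect, though harmlessly so: at a reset $r$ the equation $x\rho_r=\rho_r x'$ admits the solution $x=x'=\varepsilon$, so $rog(r)=0$ (as the paper notes, $rog(u,v)=0$ iff $u=v$), not the smallest period of $\rho_r$. Consequently the entire border/period digression is unnecessary: backward propagation from $rog(r_{i+1})=0$ over at most $g_i\le M+1$ steps already gives $rog(m)\le M+1$ on that interval.
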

\begin{proof}
 If $ROG$ is a finite set, then it is obvious that $OG$ is also finite since, by definition,   for every $n\in\N$, $og(n)=\mbox{min}\{log(n),rog(n)\}$.  In order to prove the inverse implication, assume that $OG$ is a finite set and suppose that $ROG$ is infinite.  Let $M$ be the maximum element of $OG$ and let $n\in\N$ be such that $rog(n)>2M$. Hence, Lemma~\ref{lemma:logn+1} implies that, for every $i\in\{0,1,\ldots,M\}$,
\begin{equation}\label{eq:rogn+1}
rog(n+i)\geq rog(n)-i>M.
\end{equation}
 So, by definition of $og$  and $M$, it follows that $log(n+i)\leq M$ for all  $i\in\{0,1,\ldots,M\}$. If all such $log(n+i)$ would be not null one would deduce from  Lemma~\ref{lemma:logn+1} that $log(n+M)=log(n)+M>M$. As this does not happen by hypothesis, one deduces that $log(n+j)=0$ for some  $j\in\{0,1,\ldots,M\}$. As, for every $k\in\N$, $log(k)=0$ if and only if $rog(k)=0$, it follows that  $rog(n+j)=0$. This is in contradiction with~\eqref{eq:rogn+1} and, so, we conclude that  $ROG$ cannot be infinite, thus concluding the proof of the proposition.
\end{proof}
\begin{remark}\label{remark:upper_bound_ROG}
It should be noted that it follows from the above proof that, if  $M$ is an upper bound for $OG_{\lambda,\rho}$, then $2M$ is an upper bound for $ROG_{\lambda,\rho}$.
\end{remark}

As a result of Proposition~\ref{prop:og_vs_rog},  we will now focus our attention on the function  $rog_{\lambda,\rho}$.

\subsection{The rog function for ultimately periodic words}\label{subsec:rog_for_upw}

In Examples~\ref{example:periodic} and~\ref{example:ult-periodic}, we have considered ultimately periodic words $\lambda\in \NA$   and  $\rho\in\AN$  with the same set of period words. In both examples $\big(rog_{\lambda,\rho}(n)\big)_{n\in\N}$ is an ultimately periodic sequence.  This is  true in general for words of that form, as we shall prove next.

In this section, we fix  two ultimately periodic words $\lambda\in \NA$  and  $\rho\in\AN$ with equal sets of period words
 and let 
$$\lambda=\infe{u}w_1\quad\mbox{and}\quad\rho=w_2\infd{v}$$
 be their canonical forms. It is clear that  the period words $u$ and $v$ of $\lambda$  and  $\rho$ are conjugate words, say $u=yz$ and $v=zy$ with $z\in A^*$ and $y\in A^+$, so that the period numbers $|u|$ and $|v|$ of, respectively, $\lambda$   and  $\rho$ are the same integer $p\geq 1$.  Let $$m'=\mbox{min}\{|w_1|,|w_2|\},\quad  m=\mbox{max}\{|w_1|,|w_2|\}\quad\mbox{and}\quad M=m+p-1.$$ 
 The other case being symmetric, we consider only the case in which $m=|w_1|$ and  $m'=|w_2|$. 

We begin by showing that  $M$ is an upper bound for $rog_{\lambda,\rho}$.
\begin{lemma}\label{lemma:rog-ult-period}
For every $n\in\N$, $rog_{\lambda,\rho}(n)\leq M$.
\end{lemma}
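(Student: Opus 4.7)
The plan is to split into cases according to the size of $n$. If $n \leq M$, then the trivial choice $x = \rho_n$, $x' = \lambda_n$ gives $x\lambda_n = \rho_n\lambda_n = \rho_n x'$, so $rog(n) \leq n \leq M$.

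So assume $n \geq M + 1 = m + p$. I will produce an integer $k \in \{m, m+1, \ldots, M\}$ together with words $x, x' \in A^k$ satisfying $x\lambda_n = \rho_n x'$. Taking $x$ to be the first $k$ letters of $\rho_n$ and $x'$ the last $k$ letters of $\lambda_n$, the equation reduces to the requirement that the prefix of $\lambda_n$ of length $n-k$ coincide with the suffix of $\rho_n$ of length $n-k$. The condition $k \geq m = |w_1| \geq |w_2|$ guarantees that these two substrings lie entirely inside the periodic parts $\infe u$ of $\lambda$ and $\infd v$ of $\rho$, respectively.

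The crucial identity is $u^N = (yz)^N = y(zy)^{N-1}z = yv^{N-1}z$ for $N \geq 1$, which re-parses $\lambda$'s periodic part in terms of $v$-blocks with a fixed offset. Under this re-parsing, both the prefix of $\lambda_n$ and the suffix of $\rho_n$ in question become factors of $\infd v$ of common length $n-k$. Since $v$ is primitive (being a conjugate of the primitive canonical period $u$), two factors of $\infd v$ whose starting positions agree modulo $p$ are automatically equal. A careful accounting of offsets reduces the matching condition to a single congruence of the form $k \equiv m + m' + |z| - n \pmod{p}$, which is automatically independent of the position within the shared factor because advancing by one letter shifts both starting phases by one. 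Since $\{m, m+1, \ldots, m+p-1\}$ is a complete set of residues modulo $p$, a unique $k$ in this interval satisfies the congruence, and $k \leq M$ as required. The main technical obstacle is precisely this bookkeeping of indices: writing the letter at position $-m - s$ of $\lambda$ (for $s \geq 0$) as a specific letter of $v$ via the rewriting, and doing the analogous computation for $\rho$, so that both substrings are presented as explicit factors of $\infd v$ with comparable starting phases.
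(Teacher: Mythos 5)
Your argument is correct and reaches the same bound $M=m+p-1$, but it organizes the verification differently from the paper. The paper also produces explicit witnesses $x,x'$, but does so by decomposing $\lambda_n=u_1u^{\ell_1}uw_1$ and $\rho_n=w_2zu^{\ell_2}u_2$ and running a three-way case analysis (on whether $u_2=\varepsilon$, and on how $|u_1u^{\ell_1}|$ compares with $|u^{\ell_2}|$), splicing the two decompositions together in each case. You instead fix the overlap length $k$ first: the equation $x\lambda_n=\rho_nx'$ with $|x|=|x'|=k\geq m$ reduces to matching a length-$(n-k)$ prefix of $\lambda_n$ against a length-$(n-k)$ suffix of $\rho_n$, both of which lie in the periodic parts, and the matching condition collapses to the single congruence $k\equiv m+m'+|z|-n\pmod p$ (which I checked is the right one), uniquely solvable in the window $\{m,\dots,M\}$. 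This trades the paper's case analysis for index bookkeeping; you state rather than carry out that bookkeeping, but the congruence you give is correct and the position-independence holds for exactly the reason you cite, so what remains is routine. One small misattribution: primitivity of $v$ is not what makes two factors of $\infd{v}$ with starting positions congruent modulo $p$ equal --- plain $p$-periodicity suffices (primitivity would be needed for the converse, which you do not use).
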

\begin{proof}  That the condition $rog(n)\leq M$ holds for every $n\leq M$ is trivial. Consider now an $n>M$ so that  $\lambda_n$ and $\rho_n$ can be written in the forms $$\lambda_n=u_1u^{\ell_1}uw_1\quad\mbox{and}\quad \rho_n=w_2zu^{\ell_2}u_2,$$
where $\ell_1$ and $\ell_2$ are non-negative integers and $u_1$ and $u_2$ are, respectively, a proper suffix and a proper prefix of $u$. 

If  $u_2=\varepsilon$, then $|zu^{\ell_2}|\geq |u_1u^{\ell_1}u|$. As both words $zu^{\ell_2}$ and $u_1u^{\ell_1}u$ are suffixes of a power of $u$, it follows that  $u_1u^{\ell_1}u$  is a suffix of $zu^{\ell_2}$, whence $zu^{\ell_2}=u'_1u_1u^{\ell_1}u$ for some word $u'_1$. Therefore,  $w_2u'_1\lambda_n=w_2u'_1u_1u^{\ell_1}uw_1=w_2zu^{\ell_2}w_1=\rho_nw_1$ and so $rog(n)\leq |w_1|=m\leq M$. 

Suppose now that  $u_2\neq\varepsilon$ and let $u'_2$ be the proper suffix of $u$ such that $u=u_2u'_2$. If $|u_1u^{\ell_1}|\leq |u^{\ell_2}|$, then $u^{\ell_2}=u'_1u_1u^{\ell_1}$ for some word $u'_1$. Then $w_2zu'_1\lambda_n=w_2zu'_1u_1u^{\ell_1}uw_1=w_2zu^{\ell_2}u_2u'_2w_1=\rho_nu'_2w_1$, so that $rog(n)\leq |u'_2w_1|\leq p-1+m= M$.  Assume next that $|u_1u^{\ell_1}|> |u^{\ell_2}|$, whence $u_1u^{\ell_1}=u'_1u^{\ell_2}$ for some word $u'_1$. We notice that in this case $|u'_1|< |z|$ since $n=|u_1u^{\ell_1}uw_1|=|w_2zu^{\ell_2}u_2|$ and $|uw_1|> |w_2u_2|$. Therefore, $|u'_1|< |u|$ and thus $u_1=u'_1$ and $\ell_1=\ell_2$. As $u_1$ and $z$ are suffixes of $u$, we deduce that $z=z'u_1$ for some word $z'$ and, as consequence, that $zu^{\ell_2}=z'u_1u^{\ell_1}$. It follows that $w_2z'\lambda_n=w_2z'u_1u^{\ell_1}uw_1=w_2zu^{\ell_2}u_2u'_2w_1=\rho_nu'_2w_1$. This shows that  $rog(n)\leq M$ also in the present case, thus concluding the proof of the lemma.
 \end{proof} 

 Proposition~\ref{prop:rog-ult-period} below proves the  ultimately periodic nature of the function $rog_{\lambda,\rho}$. It shows in particular that for every  integer $n$ greater than a certain threshold $n_0$, $rog_{\lambda,\rho}(n)\in\{m,m+1,\ldots,M\}$.  We are not interested in determining the optimal threshold $n_0$ for each specific pair $(\lambda,\rho)$. However, we have tried to minimize grossly a threshold that works for a generic pair $(\lambda,\rho)$. Such a threshold is, of course, at least $m$. Although Examples~\ref{example:periodic} and~\ref{example:ult-periodic} may suggest that the  threshold could be close to $m+p$, this is not true in general. For instance, for an arbitrary integer $i\geq 1$, if $\lambda=\infe{a}ba^i$ and $\rho=a^ib\infd{a}$, then $rog(2i+1)=0<i+1=m$. 

 Let $\overline m$ be the (unique) integer in the interval $\{M,M+1,\ldots,M+p-1\}$ such that $\overline m\equiv  m'+|z|\;(\mbox{mod }p)$ and notice that, as $m'+|z|\leq M$, $\overline m=  m'+|z|+dp$ for some non-negative integer $d$. Let $\overline w_2$ be the prefix of $\rho$ of length $\overline m$, so that $\overline w_2=w_2zu^d$ and $\rho=\overline w_2\infd{u}$. Finally, let $k_0$ be the least positive integer such that $|z|+(d+k_0)p>M$.
\begin{proposition}\label{prop:rog-ult-period} 
For all integers $k\geq k_0$ and  $j\in\{0,1,\ldots,p-1\}$, $$rog_{\lambda,\rho}(\overline m+1+j+pk)=M-j.$$
\end{proposition}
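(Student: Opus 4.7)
The plan is to establish both inequalities $rog_{\lambda,\rho}(n) \leq M - j$ and $rog_{\lambda,\rho}(n) \geq M - j$ at $n = \overline m + 1 + j + pk$, exploiting the reformulation that $x\lambda_n = \rho_n x'$ with $|x| = |x'| = q$ holds for some $x, x'$ exactly when the prefix of $\lambda_n$ of length $n - q$ equals the suffix of $\rho_n$ of length $n - q$. Writing $u = u^{(0)} u^{(1)} \cdots u^{(p-1)}$ with $u^{(i)} \in A$ and setting $\pi_u(i) := u^{(i \bmod p)}$, the periodicity of $\lambda$ and $\rho$ gives $\lambda_n[i] = \pi_u(m - 1 - n + i)$ for $i \leq n - m$ (positions inside the $\infe{u}$-tail of $\lambda$) and $\rho_n[i] = \pi_u(i - \overline m - 1)$ for $i \geq m' + 1$ (positions past the preperiod $w_2$ of $\rho$).

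For the upper bound I would take $q = M - j$ and verify the equality of prefix and suffix of length $\ell := n - M + j$ directly. The prefix lies inside the periodic tail of $\lambda$ since $\ell \leq n - m$ (using $j \leq p - 1$), and the suffix begins at position $M - j + 1 \geq m' + 1$ (using $m' \leq m$ and $j \leq p - 1$), so both sides are values of $\pi_u$; substituting $n = \overline m + 1 + j + pk$ shows the two $\pi_u$-indices differ by a multiple of $p$, delivering the equality.

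For the lower bound, Lemma~\ref{lemma:logn+1} reduces matters to the base case $j = 0$: once $rog_{\lambda,\rho}(\overline m + 1 + pk) \geq M$ is known, iterating $rog_{\lambda,\rho}(n+1) \geq rog_{\lambda,\rho}(n) - 1$ yields $rog_{\lambda,\rho}(\overline m + 1 + j + pk) \geq M - j$. For the base case, suppose some $q < M$ yields an overlap at $n = \overline m + 1 + pk$. The hypothesis $k \geq k_0$ translates to $n > M + m' + 1$, which ensures the existence of some index $i$ where both $\lambda_n[i]$ and $\rho_n[q+i]$ are given by $\pi_u$; the resulting equation forces the phase condition $q \equiv m - 1 \pmod p$. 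Combined with $0 \leq q \leq m + p - 2$, this congruence forces $q \leq m - 1$, hence $q < m$, so the index $i := n - m + 1$ lies in the overlap window. The overlap equation there reads $w_1[1] = \rho_n[q + n - m + 1]$, whose right-hand side sits inside the periodic zone of $\rho$ and reduces via the phase condition to $\pi_u(0) = u^{(0)}$, the first letter of $u$. This forces $w_1[1] = u^{(0)}$, contradicting the canonicity of $\lambda = \infe{u} w_1$, which requires the first letters of $u$ and $w_1$ to differ.

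The delicate point is the periodic bookkeeping placing the critical index $i = n - m + 1$ simultaneously in the overlap window, on the preperiod side of $\lambda$ (so the LHS equals $w_1[1]$), and on the periodic side of $\rho$ (so the RHS is governed by $\pi_u$); the defining inequality of $k_0$ is precisely the quantitative input making all three conditions hold uniformly in $q \in \{0, 1, \ldots, M - 1\}$.
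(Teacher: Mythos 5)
Your proof is correct, but it takes a genuinely different route from the paper's. The paper works with explicit word decompositions: for the upper bound it writes $\lambda_n=u_1u^{k+1}w_1$ and $\rho_n=\overline w_2u^ku_2$ and exhibits the witnesses $x=w_2u_1'$, $x'=u_2'w_1$; for the lower bound it first shows $q\geq m$ (via the canonical-form condition that the first letter of $w_1$ differs from that of $u$) and then rules out $m\leq q<M-j$ by producing an equation $u=u'u''=u''u'$ contradicting primitivity -- all $j$ being handled simultaneously. You instead translate everything into positional arithmetic with the rotation function $\pi_u$, and -- this is the nicest divergence -- you dispose of general $j$ by proving only the base case $j=0$ and invoking the inequality $rog(n+1)\geq rog(n)-1$ of Lemma~\ref{lemma:logn+1}, which the paper never exploits here. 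Your lower-bound argument also inverts the paper's logic: you use primitivity first (the phase congruence $q\equiv m-1\pmod p$, which forces $q\leq m-1$) and canonicity of $w_1$ second, whereas the paper does the reverse. The same two structural inputs appear in both proofs, so neither argument is more general, but yours is shorter on the case analysis and makes the role of $k_0$ more transparent. One step needs to be stated more carefully: a single index $i$ at which both $\lambda_n[i]$ and $\rho_n[q+i]$ are governed by $\pi_u$ gives only one letter equation and does \emph{not} force the phase condition; you need a window of at least $p$ consecutive such indices, so that two rotations of the primitive word $u$ agree on a full period and must coincide. Your hypothesis $n>M+m'+1$ does deliver a window of length at least $p$ for every $q<M$ (of length $n-m\geq p$ when $q\geq m'$, and $n-m-m'+q>p$ when $q<m'$), so the gap is one of phrasing rather than substance, but the quantifier should be fixed in a final write-up.
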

\begin{proof} 
Consider integers $k\geq k_0$ and  $j\in\{0,1,\ldots,p-1\}$ and let $n=\overline m+1+j+pk$.    Notice that $M< \overline m+1\leq M+p$, whence  $\lambda_n$ and $\rho_n$ can be written in the forms 
$$\lambda_n=u_1u^{k+1}w_1\quad\mbox{and}\quad \rho_n=\overline w_2u^{k}u_2,$$
where  $u_1$ is a  suffix of $u^2$ and $u_2$ is a prefix of $u$ with $|u_1|=\overline m-M+j$ and $|u_2|=1+j$. We now observe that $|\overline w_2|-|u_1|=M-j\geq m\geq m'=|w_2|$. Hence $|u_1|\leq |zu^d|$ and so, as both $u_1$ and $zu^d$ are suffixes of a power of $u$, $u_1$ is a suffix of $zu^d$.  Let $x=w_2u'_1$ and $x'=u'_2w_1$, where $u'_1$ and $u'_2$ are the words defined by $zu^d=u'_1u_1$ and $u=u_2u'_2$. Then
\begin{equation}\label{eq:xlambda=rhox'}
x\lambda_n=w_2u'_1u_1u^{k+1}w_1=w_2zu^{d+k+1}w_1=\overline w_2u^{k}u_2u'_2w_1=\rho_nx'.
\end{equation}
As $|x|=|x'|= M-j$, this shows that $rog(n)\leq M-j$. 

Let  $q=rog(n)$. Then $x_1\lambda_{n}=\rho_{n}x'_1$, that is, $x_1u_1u^{k+1}w_1= w_2zu^{d+k}u_2x'_1$,   for some $x_1,x'_1\in A^q$. By definition of $k_0$ and since $k\geq k_0$, $|zu^{d+k}|=|z|+(d+k)p\geq |z|+(d+k_0)p>M$.  It follows that $|zu^{d+k}u_2x'_1|\geq |uw_1|$ and, so, $uw_1$ is a suffix of $zu^{d+k}u_2x'_1$. Suppose that $q<m$, whence $m>0$ and $w_1\neq\varepsilon$. If  $a$ is the first letter of $w_1$, then $ua$ is a factor of the $p$-periodic word $zu^{d+k}u_2$. This is in contradiction with the fact that $a$ is not the first letter of $u$ because $\lambda=\infe{u}w_1$ is in canonical form. Therefore  $q\geq m$. This  already proves the result for $j=p-1$. So, we assume $j\neq p-1$ and suppose that  $m\leq q<M-j$. As $x'_1$ is a suffix of $\lambda_{n}$ of length $q\geq m$, it follows that $x'_1=x'_2w_1$  and $x_1u_1u^{k+1}= w_2zu^{d+k}u_2x'_2$ for some word $x'_2$. Since we are assuming $|x'_1|<|x'|$,  we have further $|x'_2|<|u'_2|$. Now, as $k\geq k_0\geq 1$, we deduce that $uu_2x'_2$  is a suffix of $u^2$. Denoting  $u''=u_2x'_2$, it follows that $u=u'u''=u''u'$ for some word $u'$. Since $u'$ and $u''$ are both non-empty, this is a contradiction with the fact that $u$ is a primitive word  because $\lambda=\infe{u}w_1$ is in canonical form. This proves that $q$ must be equal to $M-j$ and concludes the proof of the proposition.
\end{proof} 
\subsection{The main result}\label{subsec:main-result}

In the last section, we have shown that ultimately periodic words  with the same set of period words determine right overlap gap functions with finite image. We now  prove that this happens only for that kind of infinite words.

\begin{theorem}\label{theo:finite_rog}
Consider two infinite  words $\lambda\in \NA$   and  $\rho\in\AN$. The set $ROG_{\lambda,\rho}$ is  finite if and only if $\lambda=\infe{u}w_1$ and $\rho=w_2\infd{u}$ for some words  $u,w_1,w_2\in A^*$.
\end{theorem}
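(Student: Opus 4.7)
The ``if'' direction of the equivalence follows at once from Lemma~\ref{lemma:rog-ult-period} combined with Proposition~\ref{prop:og_vs_rog}, so I concentrate on the nontrivial ``only if'' direction. Assume $rog_{\lambda,\rho}(n)\le M$ for every $n$. For each sufficiently large $n$ the length-$rog(n)$ witness $(x_n,x'_n)$ of $x_n\lambda_n=\rho_nx'_n$ is uniquely determined: comparing the first $rog(n)$ letters of each side forces $x_n=\rho_{rog(n)}$, and comparing the last $rog(n)$ letters forces $x'_n=\lambda_{rog(n)}$. Since $rog(n)\in\{0,\ldots,M\}$, pigeonhole yields some $q\le M$ and an infinite set $N=\{n_1<n_2<\cdots\}$ with $n_1>q$ such that $\rho_q\lambda_n=\rho_n\lambda_q$ for every $n\in N$.

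Writing $\lambda_n=T_n\lambda_q$ and $\rho_n=\rho_qT_n$ defines a word $T_n$ of length $n-q$ that is simultaneously the prefix of length $n-q$ of the right-infinite word $b_qb_{q+1}\cdots$ and the suffix of the same length of the left-infinite word $\cdots a_{-q-1}a_{-q}$. Consequently, for $n_i<n_j$ in $N$, $T_{n_i}$ is both a prefix and a suffix of $T_{n_j}$, so $T_{n_j}$ admits $n_j-n_i$ as a period. Thus every $T_{n_j}$ carries the whole family of periods $\{n_j-n_i:i<j\}$.

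The technical heart of the proof is to collapse these periods to a single bounded one, from which ultimate periodicity of $\lambda$ follows. I would apply the Fine--Wilf theorem iteratively to these periods, which drives (whenever the length hypothesis is met) the period of $T_{n_j}$ down to the fixed integer $d^*:=\gcd\{n_{k+1}-n_k:k\ge1\}$. To guarantee the Fine--Wilf length hypothesis is met for infinitely many $j$, I would use a counting argument: partitioning $\N=\bigsqcup_{q'\le M}N_{q'}$ where $N_{q'}=\{n:rog(n)=q'\}$, if every infinite $N_{q'}$ satisfied $n_j^{(q')}>n_{j-1}^{(q')}+n_{j-2}^{(q')}+1-q'$ for all large $j$, each would grow at least Fibonacci-like and so have $|N_{q'}\cap[1,n]|=O(\log n)$; summing the $M+1$ contributions would give $O(\log n)$ instead of $n$, contradicting the partition. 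So for some $q'$, Fine--Wilf applies to infinitely many indices of $N_{q'}$; a further pigeonhole on the divisors of $d^*$ singles out a common $p$ that is a period of $T_{n_j}$ for infinitely many $j$. Since the $T_{n_j}$ are suffixes of $\cdots a_{-q-1}a_{-q}$ of length tending to infinity, the left-infinite body of $\lambda$ is $p$-periodic, yielding $\lambda=\infe{v}\lambda_q$ for some $v$ with $|v|=p$.

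Finally, because $T_n$ is simultaneously a prefix of the tail of $\rho$ and periodic with period $p$, arbitrarily long prefixes of $b_qb_{q+1}\cdots$ are $p$-periodic, so the tail of $\rho$ beyond position $q$ is itself periodic. Taking $u$ to be an appropriate cyclic conjugate of $v$ so that it coincides with $\rho$'s first $p$ letters after $\rho_q$, and adjusting the preperiod on the $\lambda$ side accordingly, one obtains $\lambda=\infe{u}w_1$ and $\rho=w_2\infd{u}$ with the same word $u$. The main obstacle is the third paragraph: executing the Fine--Wilf iteration while preserving its length hypothesis at each step, and synchronising this with the counting argument to secure infinitely many applicable indices in some $N_{q'}$, is where the substantive combinatorial work lies.
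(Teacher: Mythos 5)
Your first two paragraphs are sound and essentially reproduce the paper's setup in a different guise: fixing a value $q$ attained infinitely often by $rog$ and extracting the words $T_{n}$ that are simultaneously suffixes of $\cdots a_{-q-1}a_{-q}$ and prefixes of $b_qb_{q+1}\cdots$, each a border of the next, is the same skeleton as the paper's reduction to $\min\overline{ROG}=0$ followed by the analysis of the coincidence words $t_i$ with $t_{i+1}=u_it_i=t_iv_i$. Paragraph four is also fine \emph{conditionally}: once infinitely many $T_{n_j}$ share a single bounded period $p$, the conjugacy of the period words of $\lambda$ and $\rho$ does follow. The problem is that paragraph three, which you yourself flag as ``where the substantive combinatorial work lies,'' is the entire content of the hard direction, and the route you sketch does not close. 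The counting argument only guarantees, for some class $N_{q'}$, infinitely many indices $j$ at which \emph{one} application of Fine--Wilf is legitimate; that application yields the period $g_j=\gcd(n_j-n_{j-1},\,n_j-n_{j-2})=\gcd(k_j,k_{j-1})$ with $k_j=n_j-n_{j-1}$, and this quantity is a \emph{multiple} of $d^*=\gcd\{n_{k+1}-n_k\}$, not a divisor of it, and is in general unbounded (e.g.\ whenever $k_j=k_{j-1}$ it equals $k_j$). So the ``further pigeonhole on the divisors of $d^*$'' has no finite set to pigeonhole into, and iterating Fine--Wilf to drive the period down to $d^*$ requires fresh length hypotheses at every step that the counting argument does not provide. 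As it stands, the case in which the gaps $k_j$ are unbounded is not ruled out and not exploited.

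The paper handles exactly this case by a different mechanism, which is the idea your outline is missing: it uses the \emph{maximality} of $M$ among the infinitely-attained values of $rog$ to produce a contradiction. Working with the \emph{minimal} infinitely-attained value (so that, after a shift, $\lambda_{n_i}=\rho_{n_i}$ and there is no coincidence strictly between $n_i$ and $n_{i+1}$), it splits into two cases: if some gap value $k$ recurs infinitely often, the conjugacy structure $t_i=(r s)^{\ell_i}r$ immediately gives $\lambda=\infe{u}r$ and $\rho=\infd{u}$; if instead $k_i>M$ eventually, it shows that $rog(n_{i+1}-(M+1))$ equals exactly $M+1$ for infinitely many $i$ --- the upper bound $M+1$ comes from the explicit factorization of $u_i$ and $v_i$, and the lower bound uses primitivity ($z_1z_2=z_2z_1$ would create a coincidence point strictly between $n_i$ and $n_{i+1}$). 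That puts $M+1$ in $\overline{ROG}$, contradicting the definition of $M$. If you want to salvage your approach you would need either to supply the Fine--Wilf bookkeeping in full (and I do not see how to bound the $g_j$ without further input), or to import the paper's device of manufacturing a forbidden large value of $rog$; note also that your choice of an arbitrary infinitely-attained $q$, rather than the minimal one, forfeits the ``no intermediate coincidence'' property that the paper's contradiction relies on.
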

\begin{proof} Let $\lambda=(a_n)_{n\in-\N}$   and  $\rho=(b_n)_{n\in\N}$.  If $\lambda$ and $\rho$ are of the forms  $\lambda=\infe{u}w_1$ and $\rho=w_2\infd{u}$, then $ROG$ is finite by   Lemma~\ref{lemma:rog-ult-period}. It remains to prove the direct implication of the theorem. So, let us assume that $ROG$ is a finite set. In general, we denote by $\overline{ROG}_{\lambda,\rho}$ the subset of $ROG_{\lambda,\rho}$ formed by the elements that are image under $rog_{\lambda,\rho}$ of an infinite number of non-negative integers. Let $m=\mbox{min}\;\overline{ROG}$ and 
$M=\mbox{max}\;\overline{ROG}$. We reduce to the case in which $m=0$. Suppose that $m\neq 0$ and let $\lambda'\in \NA$   and  $\rho'\in\AN$ be the words such that
$$\lambda=\lambda'\lambda_m\quad\mbox{and}\quad \rho=\rho_m\rho',$$
that is, $\lambda'=\cdots a_{m+1}a_m$ and $\rho'=b_mb_{m+1}\cdots$. Denote by $rog'$ the function $rog_{\lambda',\rho'}$. We claim that
$$\forall n\in\N\ \Big(rog(n)\geq m\ \Rightarrow\ rog'(n-m)=rog(n)-m\Big).$$
In fact, for an $n\in \N$, if $rog(n)=p\geq m$, then $n\geq m$ and $xy\lambda_n=\rho_ny'x'$ for some words $x,x'\in A^m$ and $y,y'\in A^{p-m}$. Therefore $y\lambda'_{n-m}=\rho'_{n-m}y'$, whence $\ rog'(n-m)\leq p-m$. On the other hand, if  $\ rog'(n-m)=q< p-m$, then  $z\lambda'_{n-m}=\rho'_{n-m}z'$ for some $z,z'\in A^q$ and, so, $\rho_mz\lambda'_{n-m}\lambda_m=\rho_m\rho'_{n-m}z'\lambda_m$. Thus  $w\lambda_{n}=\rho_{n}w'$, with $w=\rho_mz$ and $w'=z'\lambda_m$, whence $rog(n)\leq m+q<p$, in contradiction with the assumption  that $rog(n)=p$. Therefore $rog'(n-m)=p-m=rog(n)-m$, which proves the claim. In particular, it follows that $\overline{ROG}_{\lambda',\rho'}=\{j-m\mid j\in \overline{ROG}_{\lambda,\rho}\}$, whence $m'=\mbox{min}\;\overline{ROG}_{\lambda',\rho'}=0$ and 
$M'=\mbox{max}\;\overline{ROG}_{\lambda',\rho'}=M-m$. It is clear that, if the direct implication of the theorem holds for the pair $(\lambda',\rho')$, then it also holds for the pair  $(\lambda,\rho)$. We may therefore assume that $m=0$.

 Let $(n_i)_{i\in\N}$ be the crescent succession of all natural numbers $n_i$ such that  $rog(n_i)=0$ (meaning that $\lambda_{n_{i}}$ and $\rho_{n_{i}}$ are the same word, which we denote by $t_i$). For each $i\in\N$, we have $t_{i+1}=u_it_{i}=t_{i}v_i$ for some words $u_i,v_i\in A^{k_i}$ where $k_i=n_{i+1}-n_i$. From the equality $u_it_{i}=t_{i}v_i$ one deduces that $t_i$ is a $k_i$-periodic word with
$$t_i=(r_is_i)^{\ell_i}r_i,\quad u_i=r_is_i,\quad v_i=s_ir_i,$$
for some $r_i\in A^*$, $s_i\in A^+$ and $\ell_i\in\N$. 

Suppose there is a $k\in \N$ and an infinite subset $I$ of $\N$ such that $k_i=k$ for all $i\in I$. Then all $u_i$ ($i\in I$) are the same word $u$ and all $v_i$ are the same word $v$, with $u=rs$ and $v=sr$ for some words $r$ and $s$. Moreover $\lambda=\infe{v}=\infe{u}r$ and $\rho=\infd{u}$. Therefore, the result holds in this case. 

Otherwise, there is  an infinite subset $I$ of $\N$ such that, for every $i\in I$, $k_i>M$ and $n_i>2M$. Let $q= M+1$ and let $i\in I$. We claim that in this case $rog(n_{i+1}-q)=q$. Since $I$ is infinite, this claim entails  that $M<q\in \overline{ROG}$, a statement inconsistent with the definition of $M$. So, actually, we will be able to deduce  that the present case is impossible, thus completing the proof of the direct implication.

 Let us show the claim. Recall that $\lambda_{n_{i+1}}=u_i\lambda_{n_{i}}=\rho_{n_{i}}v_i=\rho_{n_{i+1}}$ and $\lambda_{n_{i}}=\rho_{n_{i}}=(r_is_i)^{\ell_i}r_i$, with $u_i=r_is_i$ and $v_i=s_ir_i$ in $A^{k_i}$. Since $k_i>M$ (whence $k_i\geq q$), we have $u_i=u'u''$ and $v_i=v''v'$ for some words $u',v'\in A^q$ and $u'',v''\in A^{k_i-q}$. Hence $$\lambda_{n_{i+1}-q}=u''\lambda_{n_{i}}=(u''u')^{\ell_i}u''r_i\quad\mbox{and}\quad\rho_{n_{i+1}-q}=\rho_{n_{i}}v''=r_iv''(v'v'')^{\ell_i} $$ 
are $k_i$-periodic words.
 Once $u_i$ is a prefix of $\rho_{n_{i+1}}$, we have $u_i=\rho_{k_i}=b_0\cdots b_{k_i-1}$, whence $u'=b_0\cdots b_{q-1}$ and $u''=b_q\cdots   b_{k_i-1}$. Since $\rho_{n_{i+1}-q}$ is a prefix of $\rho$, it follows that
$$\lambda_{n_{i+1}-q}=(b_q\cdots   b_{k_i-1}b_0\cdots  b_{q-1})^{\ell_i}u''r_i\quad\mbox{and}\quad\rho_{n_{i+1}-q}=(b_0\cdots   b_{k_i-1})^{\ell_i}r_iv''.$$
Moreover $u'\lambda_{n_{i+1}-q}=u_i\lambda_{n_{i}}=\rho_{n_{i}}v_i=\rho_{n_{i+1}-q}v'$, whence  $rog(n_{i+1}-q)\leq q$. Suppose that  $rog(n_{i+1}-q)=p<q$, so that $x\lambda_{n_{i+1}-q}=\rho_{n_{i+1}-q}x'$ for some $x,x'\in A^p$. Notice that $p+k_i\leq  n_{i+1}-q$ since $n_i\geq 2M+1\geq p+q$. It follows that $xu''u'$ is a prefix of the $k_i$-periodic word $\rho_{n_{i+1}-q}$. As a consequence, we have $b_q\cdots   b_{k_i-1}b_0\cdots  b_{q-1}=u''u'=b_p\cdots   b_{k_i-1}b_0\cdots  b_{p-1}$ and, so, $u_i=b_0\cdots   b_{k_i-1}= b_{k_i-q+p}\cdots   b_{k_i-1}b_0\cdots b_{k_i-q+p-1}$.

 Let $z_1=b_0\cdots b_{k_i-q+p-1}$ and $z_2=b_{k_i-q+p}\cdots   b_{k_i-1}$. It is well known that the equality $z_1z_2=z_2z_1$ implies the existence of a word $z$ and positive integers $h_1$ and $h_2$ such that $z_1=z^{h_1}$ and $z_2=z^{h_2}$. It follows that $u_i=z^{h_1+h_2}$. Then $\lambda_{n_{i+1}}=\rho_{n_{i+1}}=z^{h}z'$ for some positive integer $h$ and some proper prefix $z'$ of $z$. Thus $\lambda_{n_{i+1}-|z|}=\rho_{n_{i+1}-|z|}=z^{h-1}z'$, whence  $rog(n_{i+1}-|z|)=0$.  This is absurd  by the definition of the sequence $(n_i)_{i\in\N}$, because  $n_i<n_{i+1}-|z|<n_{i+1}$ since $0\neq |z|<k_i$. Therefore  $rog(n_{i+1}-q)=q$, thus proving the claim. As argued above, this shows that the present case is impossible and concludes the proof of the  theorem.
\end{proof}

Combining Proposition~\ref{prop:og_vs_rog} with Theorem~\ref{theo:finite_rog}, we deduce the main result of the paper.
\begin{theorem}\label{theo:main}
For two given  words $\lambda\in \NA$   and  $\rho\in\AN$, the set $OG_{\lambda,\rho}$ is  finite if and only if $\lambda=\infe{u}w_1$ and $\rho=w_2\infd{u}$ for some words  $u,w_1,w_2\in A^*$.
\end{theorem}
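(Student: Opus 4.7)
The plan is to obtain Theorem \ref{theo:main} as an immediate corollary of the two main results already proved in the preceding subsections, namely Proposition \ref{prop:og_vs_rog} and Theorem \ref{theo:finite_rog}. No new combinatorial argument is required; the statement is a direct assembly of the pieces.

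For the forward implication, I would assume that $OG_{\lambda,\rho}$ is finite. By Proposition \ref{prop:og_vs_rog}, this is equivalent to $ROG_{\lambda,\rho}$ being finite. Applying Theorem \ref{theo:finite_rog}, this forces $\lambda$ and $\rho$ to have the shape $\lambda=\infe{u}w_1$ and $\rho=w_2\infd{u}$ for some words $u,w_1,w_2\in A^*$ sharing a common period $u$.

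For the converse, suppose $\lambda=\infe{u}w_1$ and $\rho=w_2\infd{u}$. Here $\lambda$ and $\rho$ are ultimately periodic with the same period word $u$, so in particular they have the same set of period words. This is exactly the hypothesis under which Section \ref{subsec:rog_for_upw} operates, and Lemma \ref{lemma:rog-ult-period} gives the uniform bound $rog_{\lambda,\rho}(n)\le M$ for all $n$, where $M=\max\{|w_1|,|w_2|\}+|u|-1$. Hence $ROG_{\lambda,\rho}$ is finite, and applying Proposition \ref{prop:og_vs_rog} one more time yields that $OG_{\lambda,\rho}$ is finite.

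Since both directions are just invocations of prior results, there is essentially no obstacle in this final step; all the combinatorial work lies in Theorem \ref{theo:finite_rog} (whose proof is the technically hardest part of the paper, relying on primitivity of the period word and the Fine–Lyndon-style argument at the end) and in the bookkeeping behind Proposition \ref{prop:og_vs_rog}. The only thing to be careful about in writing the proof is to cite the two results in the correct direction for each implication, which the structure above already does.
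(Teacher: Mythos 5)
Your proposal is correct and matches the paper exactly: the paper derives Theorem \ref{theo:main} in one line by combining Proposition \ref{prop:og_vs_rog} with Theorem \ref{theo:finite_rog}, which is precisely your assembly (your extra appeal to Lemma \ref{lemma:rog-ult-period} for the converse merely unfolds the easy direction of Theorem \ref{theo:finite_rog}). Both directions are cited correctly, so nothing is missing.
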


\section{The one-sided version}\label{sec:one-sided}

 Theorem~\ref{theo:main} above  characterizes the pairs of words in $\NA\times\AN$ for which the overlap gap function has finite image.  We now state and prove a  one-sided version of this result,  involving  two left-infinite words $\lambda$ and $\rho$. It should be noted that, in this new situation, the definitions in Section~\ref{subsection:overlap-gap-function} need to be adapted. To do this, simply change the definition of  $\rho_n$, which now denotes the suffix of length $n$ of $\rho$. This is a much simpler problem that was already treated in~\cite{Costa&Nogueira&Teixeira:2015} but with a slightly different definition of the overlap gap function and not announced with full generality. 

\begin{theorem}\label{theo:one-sided}
Consider two left-infinite  words $\lambda,\rho\in\NA$. The set $OG_{\lambda,\rho}$ is  finite if and only if $\rho=\lambda w$ or $\lambda=\rho w$ for some word $w\in A^*$.
\end{theorem}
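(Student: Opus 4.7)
The plan is to prove both implications directly, without invoking Theorem~\ref{theo:main}; since both words are now left-infinite, the subtle ``opposite-directions periodic overlap'' phenomenon is absent, and a clean pigeonhole argument suffices.

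For the ``if'' direction, I would first suppose $\rho=\lambda w$ with $w\in A^*$. For every $n\geq |w|$, write $\lambda_n=y\lambda_{n-|w|}$, where $y$ is the prefix of $\lambda_n$ of length $|w|$; a direct inspection of the definitions shows that $\rho_n=\lambda_{n-|w|}w$, and therefore
$$\lambda_n w=y\lambda_{n-|w|}w=y\rho_n,$$
which yields $log_{\lambda,\rho}(n)\leq |w|$. For $n<|w|$ the inequality $log_{\lambda,\rho}(n)\leq |w|$ is trivial, so $OG_{\lambda,\rho}$ is bounded by $|w|$ and, in particular, finite. The case $\lambda=\rho w$ is symmetric, giving $rog_{\lambda,\rho}(n)\leq |w|$.

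For the converse, I would assume that $OG_{\lambda,\rho}$ is finite and let $M$ be its maximum, so that for every $n\in\N$, either $log_{\lambda,\rho}(n)\leq M$ or $rog_{\lambda,\rho}(n)\leq M$. The set of pairs $(x,x')\in A^*\times A^*$ with $|x|=|x'|\leq M$ is finite, and for each $n\in\N$ at least one such pair witnesses $\lambda_n x=x'\rho_n$ or $x\lambda_n=\rho_n x'$. Applying pigeonhole first to the type of equation and then to the witnessing pair, I would extract an infinite subset $I\subseteq\N$, a fixed pair $(x,x')$ with $|x|=|x'|=m\leq M$, and a fixed choice of equation, simultaneously valid for every $n\in I$. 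If the first equation is chosen, then $\lambda_n x=x'\rho_n$ for all $n\in I$, which forces $\rho_n=\lambda_{n-m}x$ for every $n\in I$ with $n\geq m$. Given any $k\in\N$, pick $n\in I$ with $n\geq k+m$: the suffix of length $k$ of $\rho_n$ is $\rho_k$, whereas the suffix of length $k$ of $\lambda_{n-m}x$ coincides with the suffix of length $k$ of the left-infinite word $\lambda x$. Hence $\rho_k$ equals the suffix of length $k$ of $\lambda x$ for every $k$, which is exactly to say $\rho=\lambda x$, giving $w=x$. If instead the second equation is chosen, the symmetric argument delivers $\lambda=\rho y$ for some word $y\in A^*$.

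The only delicate step is the pigeonhole: one must fix not merely whether $log$ or $rog$ is the bounded side, but also the exact witnessing pair $(x,x')$, so that the equation $\lambda_n x=x'\rho_n$ holding for infinitely many $n$ can be promoted to the infinite identity $\rho=\lambda x$ by reading off arbitrarily long suffixes. No other real obstacle is expected, because the one-sided setting reduces the problem to a plain prefix/suffix comparison rather than to detecting ultimate periodicity as in the two-sided Theorem~\ref{theo:finite_rog}.
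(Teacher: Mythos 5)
Your proof is correct and follows essentially the same route as the paper's: the same bounded-overlap computation for the ``if'' direction, and a pigeonhole extraction of a fixed witness valid for infinitely many $n$ for the converse, promoted to the identity $\rho=\lambda x$ (or $\lambda=\rho x$) by reading off arbitrarily long suffixes. The only cosmetic difference is that the paper pigeonholes on the value of $og_{\lambda,\rho}$ and then observes that the length-$k$ witness is automatically forced to equal $\rho_k$, whereas you pigeonhole directly over the finitely many candidate pairs $(x,x')$; both arguments are sound.
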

\begin{proof}
Suppose first that  $\rho=\lambda w$   or $\lambda=\rho w$ for some word $w\in A^*$. Let $k=|w|$ and let $n$ be an arbitrary non-negative integer. In case  $\rho=\lambda w$, one has $$\lambda_nw=\rho_{n+k}=w'\rho_n$$ where $w'$ is the prefix of $\rho_{n+k}$ of length $k$. This shows that $log_{\lambda,\rho}(n)\leq k$ and so, since $n$ is arbitrary, that  $LOG_{\lambda,\rho}$ is  a finite set. In case $\lambda=\rho w$, one deduces symmetrically that $rog_{\lambda,\rho}(n)\leq k$ and that $ROG_{\lambda,\rho}$ is  finite. It follows, in both cases, that $OG_{\lambda,\rho}$ is  a finite set.

 Suppose now that $OG_{\lambda,\rho}$ is a finite set. Then, there exists a constant $k\in \N$ such that  $og_{\lambda,\rho}(n)=k$ for an infinite number of non-negative integers $n$.  By definition,   $og_{\lambda,\rho}(n)=\mbox{min}\{log_{\lambda,\rho}(n),rog_{\lambda,\rho}(n)\}$  for every $n\in\N$. Hence,  one  of the functions $log_{\lambda,\rho}$ and  $rog_{\lambda,\rho}$ take the same value $k$ for an infinite sequence $n_0<n_1<\cdots<n_i<\cdots$ of  natural numbers $n_i$. In the first case, that means that, for each $i\in\N$, $$\lambda_{n_i}x_i=x'_i\rho_{n_i}$$ for some $x_i,x'_i\in A^k$. Therefore, $x_i=\rho_k$ for every $i$ such that $n_i\geq k$. Moreover, denoting $\rho_k$ by $w$,  $\lambda w=\rho$.  In the second case one deduces symmetrically that $\lambda=\rho w$, with $w=\lambda_k$. This concludes the proof of the theorem.
\end{proof}

A symmetric result is valid for right-infinite words.

\section*{Acknowledgments}
This work  was supported by the European Regional Development Fund, through the programme COMPETE, and by the
Portuguese Government through FCT -- \emph{Funda\c c\~ao para a Ci\^encia e a Tecnologia}, under the project
 PEst-C/MAT/UI0013/2014.


\end{document}